\documentclass{article}
\usepackage{amsmath}
\usepackage{ntheorem}
\usepackage{amssymb}
\usepackage{authblk} % author envir package
\usepackage{algorithm}
\usepackage{algorithmicx}
\usepackage{algpseudocode}
\usepackage{graphicx}
\usepackage{subfigure}
\usepackage{epstopdf}
\usepackage{tabularx}  
\usepackage{booktabs}
\usepackage[numbers, sort]{natbib}
\usepackage{hyperref}
\usepackage[top=2cm, bottom=2cm, left=2cm, right=2cm]{geometry}  
\usepackage{xcolor} 
\usepackage[export]{adjustbox}
% \textcolor{red/blue/green/black/white/cyan/magenta/yellow/teal}{text}
\usepackage{color}
\usepackage[title]{appendix}
%\usepackage{epstopdf} 
% multi language
\usepackage[T1,T2A]{fontenc}
\usepackage[utf8]{inputenc}
\usepackage[russian, german, english]{babel}

\usepackage{authblk}

%\usepackage[
%sorting=none,
%bibencoding=auto,
%backend=biber,
%maxnames=10,
%%babel=other,
%autolang=other,
%doi=false,
%url=false,
%style=numeric-comp,
%isbn=false]{biblatex}
%\usepackage{csquotes}
%\AtEveryBibitem{\clearlist{language}}
%\AtEveryBibitem{\clearlist{note}}
%\usepackage[russian, german, english]{babel}
%%\addbibresource[glob=true]{D:/latex/ref.bib}
%\addbibresource{ref.bib}

\hypersetup{urlcolor=blue, citecolor=red}

\linespread{2}
% Theorem/math sectioning definitions

\newtheorem{theorem}{Theorem}

\newtheorem*{proof}{Proof}

\newtheorem{lemma}{Lemma}

%black board letters

\def\RR{\mathbb{R}}

%opening
\title{On the curvature bounded sphere problem in $\RR^3$}
\author[1]{Hongda Qiu}
\affil[1]{Email: hjq5042@psu.edu, Department of Mathematics, The Pennsylvania State University}

 % 修改机构名称的字体与大小
 % 去掉 and 前的逗号
\begin{document}
\maketitle
%\renewcommand\thesection{\arabic{exe}}
%\cite{burago2013uniform}

\begin{abstract}
	We prove that if a topological sphere smoothly embedded into $\RR^3$ with normal curvatures absolutely bounded by $1$ is contained in an open ball of radius $2$, then the region it bounds must contain a unit ball. This result suggests a potential direction for a novel problem by Dmitri Burago and Anton Petrunin, which asks whether a topological sphere smoothly embedded in $\RR^3$, with all normal curvatures absolutely bounded by $1$, necessarily encloses a volume of at least $\frac{4}{3}\pi$. For completeness, the appendix presents an example illustrating an alternative aspect on this problem. 
\end{abstract}
\section{Introduction}
% Backgrounds Burago and Petrunin's problem.
Volume estimates illustrate how the intrinsic curvature of a manifold constrains its embedding or immersion into an ambient space. These estimates are typically studied in broad generality (see, for instance, \cite[Chapter~7.1]{petersen2006riemannian}, \cite[Sections~34-35]{burago2013geometricinequalities}). As the exploition deepens, however, some overlooked topics arise to be interesting, even in the context of simple geometric objects.

In the early 21th century, Dmitri Burago and Anton Petrunin \cite{anton2022mathoverflow} formulated a novel problem: Let $S$ be a topological sphere smoothly embedded in $\RR^3$ with normal curvatures absolutely bounded by $1$. Is the volume enclosed by $S$ at least that of a unit ball, namely $\frac{4}{3}\pi$?\footnote{The original problem was considered in a three-dimensional non-positively curved space. It was later realized that the Euclidean case implies the Hadamard case.} This problem has inspired two directions of discussion---one aiming to prove a positive answer, and the other seeking a counterexample.

% Positve direction: Backgrounds Pestov-Ionin theorem and the work of Lagunov
Regarding the positive direction, one might intuitively attempt to generalize the Pestov--Ionin theorem \cite{pestov1959}\footnote{The original reference is in Russian. For an English reference, see, for instance, \cite[Section E, Chapter 7, p.~84]{petrunin2024differentialgeometrycurvessurfaces}} that a plane curve with curvature bounded by $1$ contains a unit disc, to its three-dimensional analogue. This idea holds when the body enclosed by the surface is convex, but it fails in general: there exists closed surfaces with normal curvatures absolutely bounded by $1$ that enclose volume $\geq\frac{4}{3}\pi$ but do not contain a unit ball \cite[p.~11]{petrunin2023pigtikalpuzzlesgeometryi}. Moreover, it is known that the maximal radius of a ball that can be contained in any smooth topological sphere with normal curvatures absolutely bounded by $1$ is strictly less than $1$ \cite[Section 30.4, p.~231]{burago2013geometricinequalities} \cite{lagunov1963extremalproblemssurfacesprescribed}.\footnote{This is an English translation by Richard Bishop of the original Russian work by Lagunov and Fet \cite{lagunov1960russianI}, though some steps are only sketched.}

A question related to the positive direction is whether the unit sphere is locally optimal in the sense that any small enough smooth perturbation increases the volume it encloses. In this work, we prove a slightly stronger statement:
\begin{theorem}
	\label{theorem enclosed unit ball}
	If the surface $S$, as defined above, is additionally contained by an open ball of radius $2$, then the region it bounds must contain a unit ball.
\end{theorem}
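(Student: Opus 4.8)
The plan is to reduce the statement to the estimate $\mathrm{inradius}(\Omega)\ge 1$, where $\Omega$ is the region bounded by $S$, since a unit ball sits inside $\Omega$ exactly when $\max_{x\in\Omega}\mathrm{dist}(x,S)\ge 1$. Let $B(c,\rho)$ be a maximal inscribed ball and suppose, for contradiction, that $\rho<1$. The first observation is that the curvature bound controls the inner normal flow: writing $\nu$ for the outward unit normal and $\Phi_t(p)=p-t\,\nu(p)$, the Jacobian of $\Phi_t$ equals $\prod_i(1-t\kappa_i)$ with all principal curvatures $\kappa_i\in[-1,1]$, hence is strictly positive for every $t\in[0,1)$. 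Thus no focal points occur before distance $1$, $\Phi_t$ is an immersion there, and a single-contact maximal ball would be an inner osculating sphere of radius $1/\kappa_{\max}\ge 1$. Therefore the only way the inradius can drop below $1$ is through the medial axis: the maximal ball must touch $S$ in at least two points $p_1,\dots,p_k$ whose outward normals $u_i=(p_i-c)/\rho$ are balanced, i.e.\ $0\in\mathrm{conv}\{u_1,\dots,u_k\}$.

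Next I would exploit that, by the same eigenvalue bound, the Gauss map $\nu\colon S\to\SS^2$ is $1$-Lipschitz for the intrinsic metric of $S$ (its differential is the shape operator, of norm $\le 1$). The balancing condition prevents the $u_i$ from lying in an open half-space, so some pair satisfies $\langle u_i,u_j\rangle\le 0$; their spherical distance is $\ge\pi/2$, and in the extremal two-point case $u_1=-u_2$ it is $\pi$. Consequently the intrinsic distance on $S$ between these contact points is at least $\pi/2$ (respectively $\pi$), and along a connecting geodesic $\gamma$ the outward normal sweeps a spherical arc of that length: the surface performs a genuine half-turn between $p_1$ and $p_2$, even though these points lie on a sphere of radius $\rho<1$ and so are at extrinsic distance $\le 2\rho<2$.

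The heart of the argument, and the step I expect to be the main obstacle, is to convert this intrinsic/normal information into an extrinsic excursion that violates $S\subset B(O,2)$, where $O$ is the center of the circumscribing ball. In two dimensions the analogous configuration is impossible outright: a plane arc of curvature $\le 1$ whose tangent turns by $\pi$ must contain a semicircle of radius $\ge 1$, so two antipodally-normalled points cannot be closer than $2$; this is exactly the Pestov--Ionin mechanism, and it already forbids inradius $<1$ for curves. In three dimensions the naive bound fails, because the half-turn can be distributed between the two principal curvatures and spread along the length of a crease; it is precisely this extra freedom that the Lagunov--Fet surfaces use to make the inradius strictly less than $1$. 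My plan is to show that containment in a radius-$2$ ball removes that freedom quantitatively: I would track $\langle\gamma(s),u\rangle$ and the normal component $\langle\nu(\gamma(s)),u\rangle$ along $\gamma$, use $|\kappa_n|\le 1$ together with the $1$-Lipschitz rotation of $\nu$ to bound from below the extrinsic span forced by a half-turn, and combine it with the contact geometry at $c$ to produce a point of $S$ at distance $\ge 2$ from $O$. Making this excursion estimate tight enough to reach the threshold $2$ is the crux; this is where the smallness of the circumscribing ball must be played against the curvature bound, and where a purely local comparison is insufficient.

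To finish, I would treat the several-contact-point cases through the balancing relation: a vanishing positive combination of the $u_i$ selects either an opposed pair or a spanning triple with controlled angles, on which the same excursion estimate can be run, and the immersion property of $\Phi_t$ on $[0,1)$ serves as a bookkeeping device, letting the inner flow sweep $\Omega$ up to the medial axis so that the first self-contact is exactly the obstruction analysed above. Once the excursion estimate yields a point of $S$ outside $B(O,2)$, the assumption $\rho<1$ is contradicted; hence $\mathrm{inradius}(\Omega)\ge 1$ and $\Omega$ contains a unit ball.
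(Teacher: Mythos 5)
Your proposal is not a proof: at its decisive step you yourself write that converting the intrinsic/normal data at the contact points into an extrinsic excursion violating $S\subset B(O,2)$ is ``the main obstacle'' and ``the crux,'' and you only describe a plan for it (tracking $\langle\gamma(s),u\rangle$ and $\langle\nu(\gamma(s)),u\rangle$) without carrying it out. This is exactly the part where the hypothesis of the radius-$2$ circumscribing ball must do its work --- everything before it (maximal inscribed ball, absence of focal points for $t<1$, two balanced contact normals, the $1$-Lipschitz Gauss map, a pair of normals with $\langle u_i,u_j\rangle\le 0$) is true but holds for \emph{any} surface with normal curvatures bounded by $1$, and the Lagunov--Fet examples show those facts alone cannot force inradius $\ge 1$. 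So the entire content of the theorem is concentrated in the step you left open, and there is no evidence in the proposal that your excursion estimate can be made to close; indeed it is not even clear what quantitative inequality you would prove, since a ``half-turn'' of the normal between two contact points at intrinsic distance $\ge\pi/2$ does not by itself produce a long extrinsic displacement in three dimensions, as you note.

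For comparison, the paper avoids the medial-axis/contact analysis entirely and injects the radius-$2$ hypothesis at the very start, through Petrunin's inequality $|\overrightarrow{Ox}|\le 2\cos\alpha(x)$ (where $\alpha$ is the angle between $\overrightarrow{Ox}$ and the normal), whose proof is where the Bow Lemma and the curvature bound interact with the circumscribing sphere. This inequality makes the body star-shaped about $O$ and makes the radial projection $\rho:S\to\Omega$ onto the circumscribing sphere a \emph{short} diffeomorphism. The enclosed unit ball is then found by a clean comparison: if $x$ is the point of $S$ farthest from $O$, the ball with diameter $\overline{Ox}$ lies inside $S$, because a closer point $y$ of $S$ to the midpoint $\frac{x}{2}$ would force $\angle(\nu(x),\nu(y))>d_\Omega(\rho^{-1}(x),\rho^{-1}(y))\ge d_S(x,y)$, contradicting the fact that normals turn at most at unit rate along geodesics; translating $S$ so that $x$ nearly touches $\Omega$ makes this ball a unit ball. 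If you want to salvage your route, the missing excursion estimate would have to encode something equivalent to the star-shapedness inequality; as written, your argument never uses the center $O$ or the radius $2$ in any inequality, which is the structural reason it cannot conclude.
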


The proof of Theorem \ref{theorem enclosed unit ball} is presented in the next Section. It relies on the observation that the body bounded by $S$ is \textit{star-shaped}; that is, every point on $S$ can be joined to $O$ by a line segment and this line segment lies in the open region bounded by $S$ \cite[Sections~35.1, p.~256]{burago2013geometricinequalities}. Consequently, our method fails when the radius of the open ball is increased to $2+\delta$ for any positive $\delta$ (for instance, consider $S$ as the unit sphere centered at a point located $1+\frac{\delta}{2}$ from $O$).

This statement offers some insight into the positive direction for the Burago--Petrunin problem. A potential approach to a positive resolution to the Burago--Petrunin problem is to divide it into two cases: one where $S$ is contained in some open ball of finite radius and another where it is not. These cases can then be treated separately. One might improve our result by increasing the radius of the bounding sphere in the statement, though this needs additional techniques as explained above. For the latter case, a potential first step is to prove the result when the radius is large.

As a remark, however, the statemnt does not yield any affirmative conclusion in either the positive or negative direction towards the Burago--Petrunin problem. There is still a hope to find a counterexample, though its construction might be highly nontrivial. For instance, one might construct a counterexample by modifying Lagunov's Fishbowl \cite{lagunov1961russianII}.\footnote{For an English reference, see, for instance, \cite[Section C, Chapter 11, p.~115]{petrunin2024differentialgeometrycurvessurfaces}} For completeness, an example suggested by Anton Petrunin \cite{anton2022mathoverflow} is included in Section \ref{appendix fishbowl} as an appendix---it is a surface of genus $2$ enclosing a volume of $ \frac{22}{3}\pi - 2\pi^2 +\epsilon \approx 3.3 +\epsilon $, where the positive constant $\epsilon$ can be arbitrarily small. Here the value $3.3$ is less than the volume of a unit ball, $\frac{4}{3}\pi\approx 4.2$. Moreover, the Burago-Petrunin problem might also be approached from alternative perspectives, such as by finding a sufficient and necessary condition for $S$ to enclose a volume of $ \frac{4}{3}\pi $. One might ask the same question for surfaces of genus $1$, $3$, and higher.

\textbf{Acknowledgement} I would like to thank Dmitri Burago and Anton Petrunin for their help and suggestions on this work. 

\textbf{Funding Statement} There are no funding statement for this work.

\section{Proof of Theorem \ref{theorem enclosed unit ball}}
\label{section proof}
Let $\Omega$ be a sphere of radius $2$ centered at the origin $O$ and assume that $S$ is contained in the open ball bounded by $\Omega$.

% Star Shape
We begin by showing that the body bounded by $S$ is star-shaped. To achieve this goal, we consider the map projecting every point on $\Omega$ radially onto $S$ and show that it is both invertible and \textit{short}---that is, it does not increase length. 
\begin{lemma}
	\label{lemma radial projection}
	The radial projection, denoted by $\rho:\Omega\to S$, is a diffeomorphism. Moreover, it is short.
\end{lemma}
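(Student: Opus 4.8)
The plan is to study the radial map through its differential and to separate the topological assertion (diffeomorphism) from the metric assertion (shortness). For $p\in S$ write $p=|p|\,\hat p$, so that $\rho(p)=\tfrac{2}{|p|}\,p$; differentiating along a curve in $S$ gives, for every $v\in T_pS$,
\[
 d\rho_p(v)=\frac{2}{|p|}\Bigl(v-\frac{\langle p,v\rangle}{|p|^2}\,p\Bigr)=\frac{2}{|p|}\,\pi_p(v),
\]
where $\pi_p$ is the orthogonal projection onto the plane $p^\perp$, which is exactly the tangent plane of $\Omega$ at $\rho(p)$. Two structural facts fall out immediately: $d\rho_p$ is singular precisely when $\hat p\in T_pS$, i.e.\ when the ray $Op$ is tangent to $S$; and the stretching of $d\rho_p$ is controlled by the factor $2/|p|$ together with the inclination of $T_pS$ to the radial direction, equivalently by the radial component $\langle\hat p,n\rangle$ of the unit normal $n$.

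First I would establish that $\rho$ is a diffeomorphism. Surjectivity is clear: since $O$ lies in the open region bounded by the compact surface $S$, every ray from $O$ meets $S$, so every point of $\Omega$ is hit. The substance is to show that (i) each ray $Op$ is transversal to $S$, so $\langle\hat p,n\rangle\neq 0$ and $d\rho_p$ is everywhere invertible, and (ii) each ray meets $S$ exactly once. Both should follow from the curvature bound together with $|p|<2$ by a rolling–unit–ball comparison: were a ray tangent to $S$, or were it to meet $S$ twice, the intervening arc of $S$ would have to bend back while keeping its normal curvatures $\le 1$, hence with radius of curvature $\ge 1$, and this is incompatible with confinement inside the ball of radius $2$ about $O$. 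Granting (i), $\rho$ is a local diffeomorphism between compact surfaces, hence proper and thus a covering map; since $\Omega$ is simply connected and (ii) makes the degree one, $\rho$ is a diffeomorphism.

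It remains to prove that $\rho$ is short, i.e.\ $|d\rho_p(v)|\le|v|$ for all $v\in T_pS$. By the differential formula this is the pointwise inequality $\tfrac{2}{|p|}\,|\pi_p(v)|\le|v|$, which after squaring and using $|\pi_p(v)|^2=|v|^2-\langle\hat p,v\rangle^2$ is equivalent to
\[
 \langle\hat p,v\rangle^2\;\ge\;\frac{4-|p|^2}{4}\,|v|^2\qquad\text{for all }v\in T_pS.
\]
Thus the metric claim reduces to a uniform lower bound on the radial component of every tangent vector, i.e.\ to showing that $S$ is everywhere sufficiently steep relative to the rays from $O$, with the required steepness growing as $|p|$ decreases. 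I expect this metric comparison to be the main obstacle, and the decisive difficulty is transparent from the displayed inequality: the radial stretch factor $2/|p|$ exceeds $1$ throughout (since $|p|<2$), so shortness cannot come from the projection alone and must be wrung entirely out of the transversality of $S$; the delicate case is that of tangent directions nearly perpendicular to $Op$, where $\langle\hat p,v\rangle$ is smallest. The plan is therefore to upgrade the qualitative transversality of the previous step to a quantitative lower bound on $\langle\hat p,n\rangle$ extracted from the normal-curvature hypothesis and the radius-$2$ confinement, and to feed that bound into the displayed estimate, paying particular attention to those worst-case azimuthal directions.
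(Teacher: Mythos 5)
There is a genuine gap, and it is in the ``short'' half: you are proving the wrong direction of the inequality, and the inequality you reduce to is false. Your own differential computation shows why. At any $p\in S$ the planes $T_pS$ and $p^\perp$ are two $2$-planes in $\RR^3$, so their intersection contains a nonzero vector $v$; for such $v$ you have $\langle\hat p,v\rangle=0$ while $|p|<2$ (since $S$ lies in the open ball bounded by $\Omega$), so your displayed requirement $\langle\hat p,v\rangle^2\geq\frac{4-|p|^2}{4}|v|^2$ fails, and in fact $|d\rho_p(v)|=\frac{2}{|p|}|v|>|v|$. Thus the map $x\mapsto \frac{2x}{|x|}$ from $S$ to $\Omega$ strictly expands these azimuthal directions at every point; no quantitative transversality bound can rescue the plan, because the ``delicate case'' you flag is not delicate but an outright counterexample. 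What the paper actually proves, and what is used later (in the proof that the ball over $\overline{Ox}$ is enclosed, via $d_\Omega(p,q)\geq d_S(x,y)$), is that the \emph{inverse} map $\Omega\to S$ is short. The lemma's wording invites your misreading, but the content is: radial projection from the big sphere down to $S$ does not increase length.

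The fix is within reach of your computation. With $\alpha=\angle(\hat p,n)$, the singular values of $d\rho_p$ are $\frac{2}{|p|}$ (along $T_pS\cap p^\perp$) and $\frac{2\cos\alpha}{|p|}$, so $d\rho_p$ is expanding in all directions --- equivalently its inverse is short --- precisely when $|p|\leq 2\cos\alpha$. This is the key inequality of the paper (Inequality \ref{equation inequality}), whose hard half is Petrunin's Lemma 4.1 in \cite{petrunin2024gromovtori}: the curvature bound together with confinement in the radius-$2$ ball forces the surface to stay ``steep'' in exactly this quantitative sense. Note that your ``rolling unit ball'' sketch for transversality is an attempt at precisely this statement, and it is the nontrivial step, so it must be proved or cited rather than asserted; the paper cites it, and separately proves (via the Bow Lemma) that $|p|$ is bounded away from $0$ and that $O$ lies inside $S$. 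Your diffeomorphism half is essentially sound --- local invertibility from transversality, then properness plus simple connectivity of $\Omega$ gives a covering of one sheet --- and is a legitimate cleaner alternative to the paper's brief global-invertibility remark, but it too rests on the same uncited inequality for step (i).
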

The proof of Lemma~\ref{lemma radial projection} relies on the following lemma. 
\begin{lemma}
	\label{lemma inequality}
	Let $S$ be the surface as defined above and $x$ any point on $S$. Denote by $\alpha = \alpha(x)$ the angle between the vector $\overrightarrow{Ox}$ and the normal vector to $S$ at $x$. Then there exists a positive constant $\epsilon$ such that
	\begin{equation}
		\label{equation inequality}
		\epsilon\leq|\overrightarrow{Ox}|\leq2\cos{\alpha}.
	\end{equation}
	Moreover, the origin $O$ lies in the open region enclosed by $S$.
\end{lemma}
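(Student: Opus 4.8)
The plan is to read the two-sided bound geometrically and reduce everything to a statement about the unit sphere tangent to $S$ from the inside. Writing $N$ for the outward unit normal of $S$ at $x$ and $\mathbf{x}=\overrightarrow{Ox}$, the inequality $|\mathbf{x}|\le 2\cos\alpha$ is equivalent, after squaring and using $\cos\alpha=\langle \mathbf{x},N\rangle/|\mathbf{x}|$, to $|\mathbf{x}|^2\le 2\langle \mathbf{x},N\rangle$, and hence to $|\mathbf{x}-N|\le 1$. Since $x-N$ is exactly the center of the unit sphere tangent to $S$ at $x$ on its inner side, the upper bound says precisely that this interior tangent unit ball contains the origin; the positivity of $\cos\alpha$ that it forces, $\langle \mathbf{x},N\rangle\ge |\mathbf{x}|^2/2>0$, is the infinitesimal star-shaped condition. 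I would therefore aim to prove the single clean statement: for every $x\in S$ the center $c(x)=x-N(x)$ satisfies $|c(x)|\le 1$.

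First I would anchor this at the point $x_0$ maximizing $|\mathbf{x}|$ on the compact surface $S$, with $R_0:=|\mathbf{x}_0|<2$. There $S$ is internally tangent to the sphere of radius $R_0$ about $O$, so $N(x_0)=\mathbf{x}_0/R_0$ and $c(x_0)=\mathbf{x}_0(1-1/R_0)$, giving $|c(x_0)|=R_0-1<1$. Thus the bound holds, and holds strictly, at the farthest point; moreover the comparison of normal curvatures at $x_0$ (the surface is more curved than the enclosing sphere) already yields $R_0\ge 1$. The strictness here is what will eventually place $O$ in the open region.

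Next I would try to propagate $|c(x)|\le 1$ from $x_0$ to all of $S$ by a connectedness argument: the set where $|c|<1$ is open and nonempty, and I would attempt to show it is also closed by examining an extremal point, i.e.\ a point $x_*$ maximizing $|c|^2=|\mathbf{x}|^2-2\langle \mathbf{x},N\rangle+1$. The first-order condition there reads $(\,\mathrm{Id}-dN\,)\mathbf{x}^\top=0$, so either $\mathbf{x}$ is radial (and then $|c|^2=(\langle\mathbf{x},N\rangle-1)^2$ is easily controlled once the relevant sign is known) or $\mathbf{x}^\top$ is a principal direction whose principal curvature equals the extreme admissible value $1$; in the latter case that curvature attains its global maximum at $x_*$, so its gradient vanishes and the corresponding Codazzi term in the second-order condition drops out. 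Comparing $\mathrm{Hess}\,|c|^2$ against the curvature bound $|\kappa_i|\le 1$ should then contradict $|c(x_*)|>1$.

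The main obstacle is exactly this propagation step, and its difficulty is structural rather than computational: the curvature bound alone does \emph{not} let a unit ball roll freely inside the region $U$ bounded by $S$ (a thin neck can stop it with no violation of $|\kappa_i|\le 1$, which is the very phenomenon behind the known examples with inradius $<1$), so the containment $S\subset B(O,2)$ must be used globally and not merely through pointwise comparisons. Concretely, this surfaces as the residual third-order (Codazzi) term in the second principal direction at $x_*$, which the curvature bound does not control on its own; taming it---by feeding back the equivalence $|c(x)|\le 1\Leftrightarrow \overline{B}(c(x),1)\subset \overline{B}(O,2)$ and the radius-$2$ hypothesis---is the crux. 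Once $|c(x)|\le 1$ is established for all $x$, the lower bound is immediate: $\langle\mathbf{x},N\rangle\ge|\mathbf{x}|^2/2\ge 0$ forces the outward normal never to point toward $O$, so $O\in\overline{U}$; the strict inequality at $x_0$ gives $O\in B(c(x_0),1)\subset U$, whence $O$ lies in the open region, $O\notin S$, and $\epsilon:=\operatorname{dist}(O,S)=\min_{x\in S}|\mathbf{x}|>0$ furnishes the left-hand inequality.
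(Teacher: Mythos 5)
Your reformulation of the upper bound as $|c(x)|\le 1$ for $c(x)=x-N(x)$ is correct and clean, and the anchoring at the farthest point $x_0$ is fine; but the heart of the lemma is precisely the inequality $|\overrightarrow{Ox}|\le 2\cos\alpha$, and your proposal does not prove it. The propagation step you describe is not merely technically incomplete---it cannot close along the lines sketched. At a maximum point $x_*$ of $|c|^2$ the first-order condition leaves open the radial case in which $N(x_*)$ points \emph{toward} $O$, where $|c(x_*)|=|\overrightarrow{Ox_*}|+1>1$ and no local curvature comparison excludes it; and in the principal-curvature-$1$ case the uncontrolled Codazzi term is exactly where global information would have to enter. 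Your structural diagnosis (that the radius-$2$ hypothesis must be used globally, not pointwise) is correct, but it is a diagnosis, not a proof. Note that the paper does not prove this inequality either: it imports it as Lemma~4.1 of \cite{petrunin2024gromovtori}, and the known argument for it is a global, curve-based one. So the one ingredient the paper cites is the one your proposal leaves unproved.

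There is a second, independent gap in how you derive the remaining assertions from the upper bound. The inequality $|\overrightarrow{Ox}|^2\le 2\langle \overrightarrow{Ox},N\rangle$ degenerates to $0\le 0$ at a point where $\overrightarrow{Ox}=0$, so it cannot by itself rule out $O\in S$; a genuinely separate argument is needed for the lower bound $\epsilon>0$. Your route---$O\in B(c(x_0),1)\subset U$---rests on the claim that the interior tangent unit ball at the farthest point is enclosed by $S$. That is exactly the rolling-ball property you yourself note fails in general under the curvature bound (Lagunov's examples), and in this paper the corresponding statement is Lemma~\ref{lemma enclosed ball}, whose proof uses Lemma~\ref{lemma radial projection} and hence the present lemma; invoking it here would be circular. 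The paper instead proves $O\notin S$ directly via the Bow Lemma of Schur: if $O\in S$, a geodesic of length $\pi$ on $S$ starting at $O$ has curvature at most $1$ as a space curve, so its endpoints lie at distance at least $2$ apart, forcing the geodesic to meet $\Omega$ and contradicting $S\subset B(O,2)$; compactness of $S$ then yields $\epsilon>0$. Finally, the assertion that $O$ lies in the open region follows in the paper from the upper bound by a tangency argument: if $O$ were not in the region bounded by $S$, a segment from $O$ tangent to $S$ at some $x_0$ would give $\cos\alpha(x_0)=0<|x_0|$, contradicting Inequality~\ref{equation inequality}. Your nearest-point argument correctly yields $O\in\overline{U}$, but your upgrade to the open region again leans on the unjustified tangent-ball containment.
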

The second part of Inequality \ref{equation inequality} is proved by Petrunin \cite[Lemma 4.1]{petrunin2024gromovtori}. We prove the first part and the final assertion.
\begin{proof}
	We prove the inequality by contradiction. Suppose that $S$ passes through the origin $O$. Note that the curvatures of geodesics on $S$ are bounded above by $1$. Consider a geodesic on $S$ of length $\pi$ starting from $O$ (such a geodesic exists because closed geodesics on $S$ have length at least $2\pi$).  By the Bow Lemma \footnote{The original version is proved by Schur \cite{schur1921bowlemma} and generalized by Schmidt \cite{schmidt1925bowlemma}. For an English reference, see \cite[Theorem~3.19, p.~51]{petrunin2024differentialgeometrycurvessurfaces}.}, the distance between the two ends of this geodesic is no less than that of a unit semicircular arc, which is $2$. It follows that this geodesic intersects $\Omega$---a contradiction, and the inequality is proved.
	
	Now we turn to the final assertion of the lemma. We proceed by contradiction. Suppose that the region bounded by $S$ does not contain the origin $O$. Then there exists a line segment starting from $O$ and tangent to $S$ at some point $x_0\in S$. It follows that $\cos\alpha(x_0) = 0 <|x_0|$, a contradiction.
\end{proof}

%Now we prove Lemma~\ref{lemma radial projection}.
\begin{proof}[Proof of Lemma~\ref{lemma radial projection}]
	Using Lemma~\ref{lemma inequality}, we can choose a unit normal vector field $\nu$ on $S$ such that $\epsilon\leq|x|\leq2\cos{\angle(x,\nu(x))}$ for every $x\in S$. Here $\epsilon$ refers to the positive constant in Lemma~\ref{lemma inequality}.
	
	Consider the map $ S\to\Omega $ defined by $x\mapsto\frac{2x}{|x|}$. The inequality $ \cos{\angle(x,\nu(x))} \geq \epsilon \geq 0 $ implies that the angle between $x$ and $\nu(x)$ is uniformly bounded from $\frac{\pi}{2}$, so this map is locally invertible. Since the region bounded by $S$ contains the origin $O$, the map is globally invertible. Its inverse defines the radial projection $\rho:\Omega\to S$.
	
	Next, we show that $\rho$ is short. Let us denote $ p = \frac{2x}{|x|} $. It follows that for any tangent vector $v$ to $S$ at $p$, the ratio between $|x|$ and $|p|$ is no less than that between the projection of the differential $D_p\rho(v)$ onto the direction of $|p|$---that is, $ D_p\rho(v)\cos{\angle(\nu(x),x)} $---and $|v|$. That is\footnote{The equality holds when $v$ lies in the plane spanned by $x$ and $\nu(x)$, as illustrated in Figure~\ref{figure short map}.},
	\begin{equation}
		\label{inequality short}
		\frac{D_p\rho(v)\cos{\angle(\nu(x),x)}}{|v|} \leq \frac{|x|}{|p|} = \frac{|x|}{2}.
	\end{equation}
	\begin{figure}[htbp]
		\centering
		\caption{An illustration for Inequality \ref{inequality short} when $v$ lies in the plane spanned by $x$ and $\nu(x)$.}
		\label{figure short map}
		\includegraphics[width=1\linewidth, cframe=black 0.1mm]{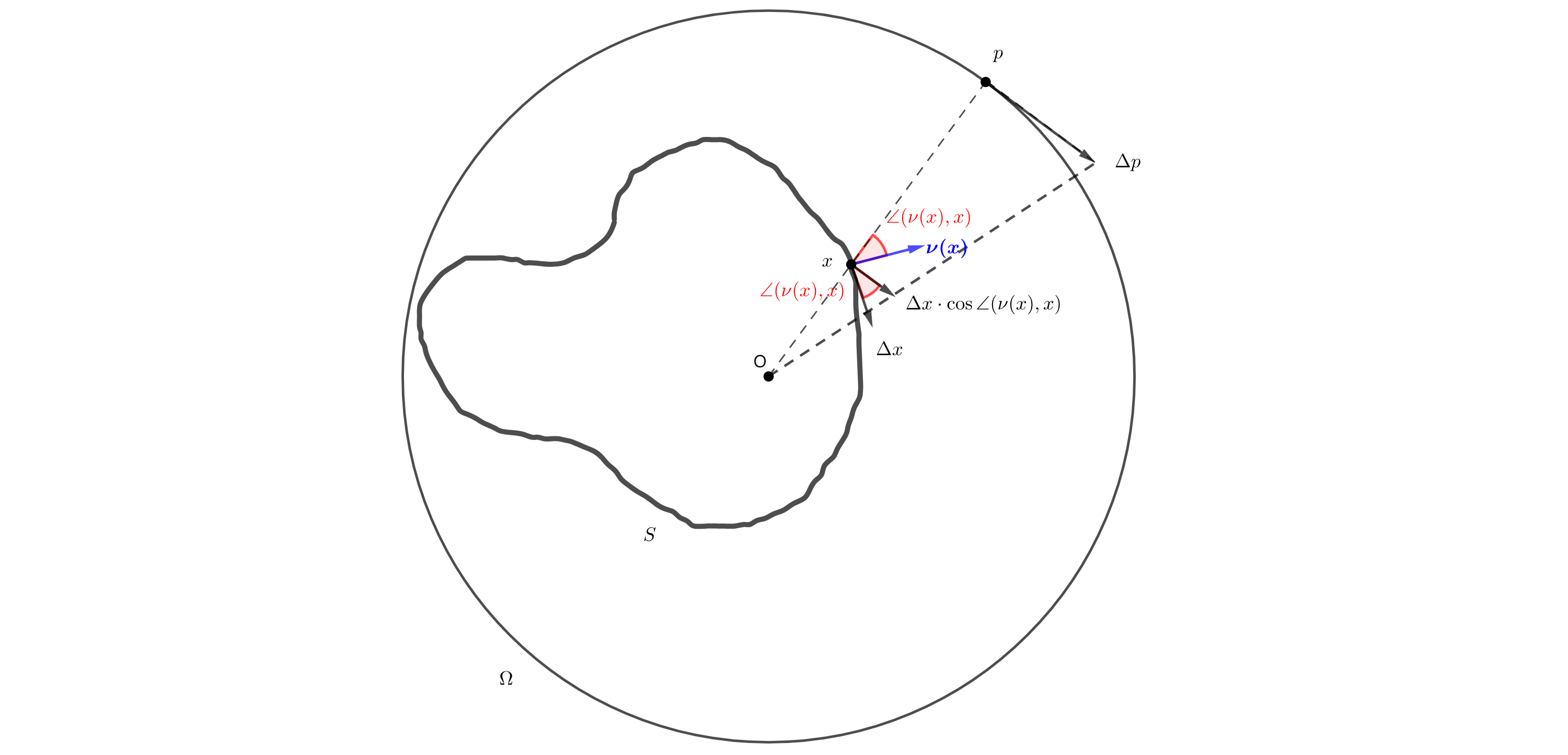}
	\end{figure}
	Recalling Inequality~\ref{equation inequality}, we have $|D_p\rho(v)|\leq|v|$, so $\rho$ is short.
\end{proof}
% There is a ball
To find a unit ball enclosed by $S$, we need the following lemma.
\begin{lemma}
	\label{lemma enclosed ball}
	If $x\in S$ attains the maximal distance from $S$ to the origin $O$, then the ball having the line segment $\overline{Ox}$ as one of its diameters is contained in the region bounded by $S$.
\end{lemma}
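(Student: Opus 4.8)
The plan is to reduce the lemma to the claim that $S$ does not penetrate the open ball: writing $R=|\overrightarrow{Ox}|$, letting $M$ be the midpoint of $\overline{Ox}$ and $B=B(M,R/2)$ the closed ball with diameter $\overline{Ox}$, I would show $S\cap B^{\circ}=\emptyset$. Granting this, let $U$ be the open region bounded by $S$; by Lemma~\ref{lemma inequality} the origin $O$ lies in $U$, and the interior of the diameter segment $\overline{Ox}$ meets $U$ near $O$, so $B^{\circ}$ is a connected set that meets $U$ but is disjoint from $S=\partial U$. Hence $B^{\circ}\subseteq U$ and $B=\overline{B^{\circ}}\subseteq\overline{U}$, which is the assertion.

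To prove $S\cap B^{\circ}=\emptyset$ I would exploit that $x$ is a farthest point, so the outward unit normal $\nu(x)$ is parallel to $\overrightarrow{Ox}$ and the tangent plane $T_xS$ is orthogonal to $\hat{x}:=\overrightarrow{Ox}/R$. Fix an arbitrary $y\in S$ and, since $S$ is compact hence complete, join $x$ to $y$ by a minimizing geodesic $\gamma$, parametrized by arc length with $\gamma(0)=x$. Because the geodesic curvature of $\gamma$ vanishes, its acceleration is purely normal to $S$, so the curvature of $\gamma$ as a space curve equals the normal curvature of $S$ in the direction $\gamma'$ and is therefore at most $1$; moreover $\gamma'(0)\in T_xS$ is orthogonal to $\hat{x}$. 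Thus $\gamma$ is a unit-speed space curve of curvature $\le 1$ issuing from $x$ in a direction orthogonal to $\hat{x}$, and $y=\gamma(L)$ lies on it.

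The crux is a rolling-ball comparison, in the spirit of the Bow Lemma: a $C^{2}$ curve of curvature at most $1$ cannot enter the open unit ball tangent to it at its initial point. Taking the inner normal $-\hat{x}$ (which is orthogonal to $\gamma'(0)$), this gives $|\gamma(s)-(x-\hat{x})|\ge 1$ for all $s$, i.e.\ $\gamma$ stays outside the open unit ball $B^{*}=B(x-\hat{x},1)$. One checks directly that $B^{*}$ and $B$ are internally tangent at $x$ with the common tangent plane $\{z:(z-x)\cdot\hat{x}=0\}$, and since $R\le 2$ forces $R/2\le 1$, that $B\subseteq B^{*}$. Consequently $\gamma$, and in particular its endpoint $y$, avoids $B^{\circ}$. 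As $y\in S$ was arbitrary, $S\cap B^{\circ}=\emptyset$, completing the argument.

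I expect the rolling-ball comparison to be the main obstacle. Its proof should come from analyzing $f(s)=|\gamma(s)-c|^{2}$ with $c=x-\hat{x}$: one has $f(0)=1$, $f'(0)=0$, and $f''(s)=2+2(\gamma(s)-c)\cdot\gamma''(s)\ge 2-2\sqrt{f(s)}$ from $|\gamma''|\le 1$, so $f$ is nondecreasing at any attempt of the curve to move inward from the sphere. Turning this monotonicity into the clean bound $f\ge 1$—in particular controlling possible excursions of $\gamma$ out of and back into the ball—is the delicate point, and is precisely where the curvature bound $\le 1$ is used. The remaining ingredients, namely the radiality of $\nu(x)$, the curvature estimate for geodesics, and the containment $B\subseteq B^{*}$, are routine.
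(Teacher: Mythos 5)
Your reduction and preliminary steps are sound: at a farthest point the normal is radial, a minimizing geodesic of $S$ has space curvature at most $1$, the containment $B\subseteq B^{*}$ follows from $R/2\le 1$, and the connectedness argument correctly converts $S\cap B^{\circ}=\emptyset$ into the lemma. The gap is precisely the step you flag as the main obstacle: the rolling-ball comparison is \emph{false} for curves of length greater than $\pi$, and a minimizing geodesic from $x$ to $y$ on $S$ can certainly be longer than that. Explicit planar counterexample: start at the origin heading east, tangent to $B^{*}=B((0,-1),1)$; go straight a small distance $\ell>0$, then turn clockwise along the unit circle centered at $(\ell,-1)$. After turning by angle $\pi+\theta$ the point is at $(\ell-\sin\theta,\,-1-\cos\theta)$, whose squared distance to $(0,-1)$ is $1+\ell^{2}-2\ell\sin\theta<1$ as soon as $\sin\theta>\ell/2$. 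So a curve of curvature $\le 1$ (piecewise circular, easily smoothed), starting tangent to the unit sphere, enters the open ball after length $\ell+\pi+\arcsin(\ell/2)$, arbitrarily close to $\pi$. Your ODE argument cannot exclude this, because $f''\ge 2-2\sqrt{f}$ carries no information once $f>1$: the curve may leave, turn around, and re-enter. There is also a structural reason the claim cannot be repaired within your framework: you use the hypothesis that $S$ lies in a ball of radius $2$ only through the trivial consequence $R/2\le 1$, so if the rolling-ball comparison held for arbitrary length, the identical argument run at an \emph{arbitrary} point of an \emph{arbitrary} closed surface with normal curvatures $\le 1$ (with the inward normal in place of $-\hat{x}$) would show every such surface contains a unit ball --- contradicting Lagunov's theorem quoted in the introduction and the Fishbowl example in the appendix.

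This global difficulty is exactly what the paper's proof is built to avoid. It takes $y$ to be the point of $S$ closest to the center $\frac{x}{2}$ of the ball (so that $\nu(y)$ points along $y-\frac{x}{2}$), assumes $y$ is inside the ball, and sandwiches the intrinsic distance: from above, $d_S(x,y)\le d_\Omega(p,q)=2\angle(pOq)$ by the shortness of the radial projection (Lemma~\ref{lemma radial projection}) --- this is where the radius-$2$ hypothesis enters globally --- and from below, $d_S(x,y)\ge\angle(\nu(x),\nu(y))$, since the normal turns at speed at most $1$ along the minimizing geodesic; an elementary angle comparison inside the ball makes these two bounds contradict each other. Both estimates hold no matter how long the geodesic is, so the excursion problem never arises. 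Any repair of your approach would need to inject the radius-$2$ constraint in a similarly global way, for instance through Lemma~\ref{lemma radial projection}, rather than only through the inclusion $B\subseteq B^{*}$.
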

\begin{proof}[Proof of Lemma~\ref{lemma enclosed ball}]
	Denote the ball in consideration as $B_1$.
	
	We prove this lemma by contradiction. Consider the point $y\in S$ closest to the center of $B_1$, namely $\frac{x}{2}$. Let $x=\rho(p)$ and $y=\rho(q)$. Assume that $y$ lies in the region bounded by $S$; that is, $|\frac{x}{2}-y|< |\frac{x}{2}|$ (Figure \ref{figure enclosed ball}).
	\begin{figure}[htbp]
		\centering
		\caption{An illustration for the assumption used in the proof of Lemma \ref{lemma enclosed ball}.}
		\label{figure enclosed ball}
		\includegraphics[width=1\linewidth,cframe=black 0.1mm]{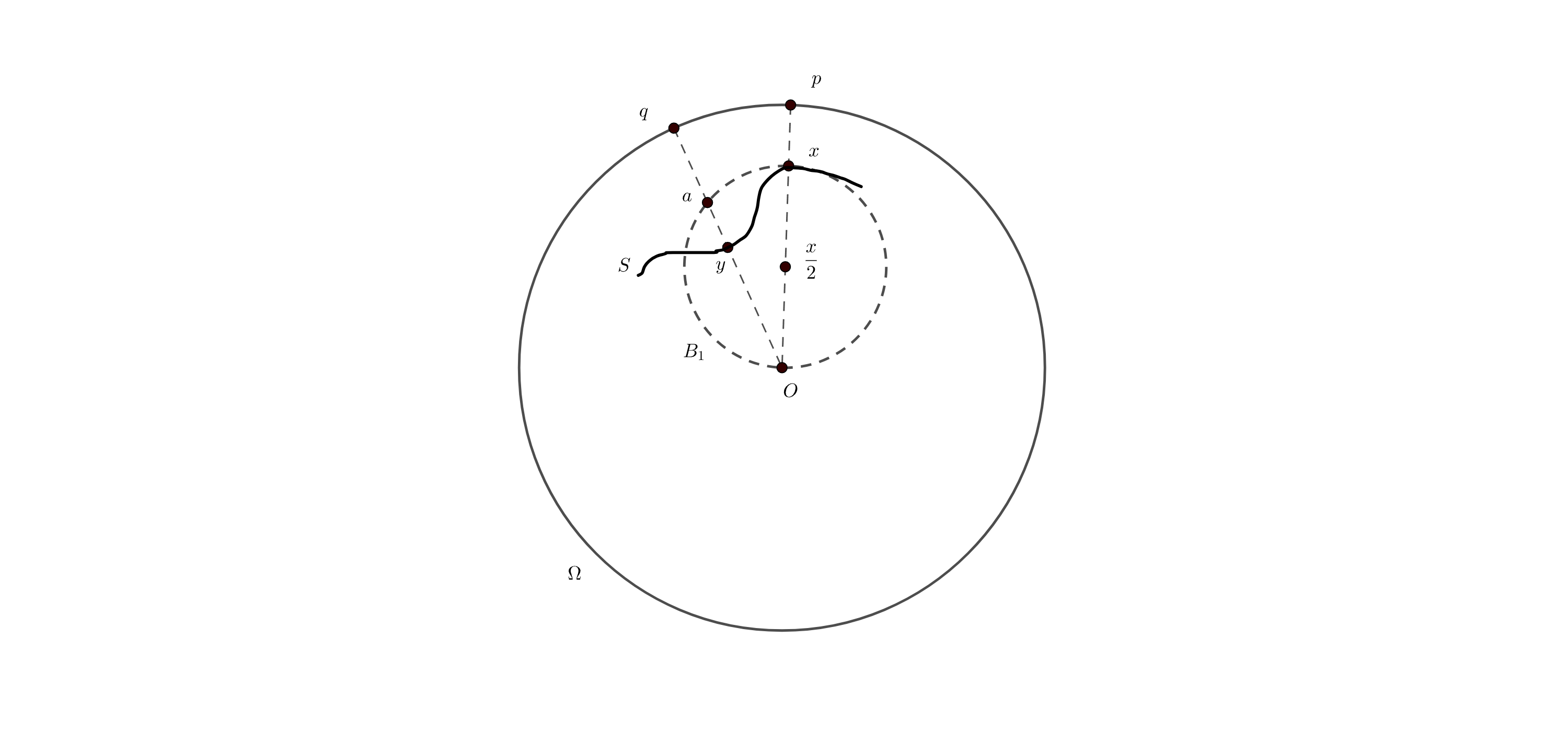}
	\end{figure}
	
	Denote by $a$ the intersecting point of the line segment $\overline{Oq}$ and the ball $B_1$. We get that
	\begin{align*}
		&\angle(\nu(x),\nu(y))& \\
		=& \angle(x,y-\frac{x}{2})\tag{since $y$ is closest to $\frac{x}{2}$}\\
		>&\angle(a\frac{x}{2}p)\tag{since $y$ is a point on the chord $\overline{Oa}$ of $B_1$}\\\
		=&2\angle(pOq)\tag{since $O,x,a$ lie on the same circle}\\
		=&d_\Omega(p,q)\\
		\geq&d_S(x,y)\tag{by Lemma~\ref{lemma radial projection}}.
	\end{align*}
	Consider the shortest geodesic on $S$ between $x$ and $y$. Since any geodesic on $S$ has curvature at most $1$, the integral of the curvature of this geodesic is bounded above by its length. It follows that the last term $d_S(x,y)$ is no less than $ \angle(\nu(x),\nu(y)) $, which is a contradiction.
\end{proof}
\begin{proof}[Proof of Theorem \ref{theorem enclosed unit ball}]
	To finish the proof, it suffices to translate the surface $S$ so that the point $x$ that maximizes the distance from $S$ to the origin $O$ gets arbitrarily close to the sphere $\Omega$ and apply Lemma \ref{lemma enclosed ball}. It follows that the unit ball tangent to $S$ at $x$ is enclosed by $S$.
\end{proof}

\section{Appendix: Petrunin's Fishbowl}
\label{appendix fishbowl}
This section is included only for completeness. We construct a closed surface of genus $2$ with all normal curvatures absolutely bounded by $1$, and whose enclosed volume is less than that of a unit ball, as follows.

We begin with two surfaces of revolution $S_1, S_2$, each of which has a cross section consisted of two horizontal lines and two half circles of radius no less than $1$ as shown in Figure~\ref{figure two surfaces}. We assume that these horizontal lines are long enough (to the extend that will be clear in a minute) and the space between them is thin. We then attach three tunnels onto $S_1,S_2$ as follows: one (the green one) joining the surfaces together, and the other two (the red one and the blue one) forming handle-like attachments on each of $S_1$ and $S_2$, respectively. The green tunnel has radius $1$, except near its ends, which are constructed by rovolving a quarter arc of the unit circle. The red and blue tunnels have radius slightly larger than $1$ (think of $1+\delta$ with a small positive constant $\delta$), and their ends are constructed akin to those of the green one. We let the horizontal lines of the cross sections of $S_1$ and $S_2$ be long enough so that the normal curvatures of the red and green tunnels remains no more than $1$ on the large upper curved parts.

It follows that most of the volume of the surface concentrates in a body of revolution enclosed by one red joint, one blue joint and the green tunnel (which appears in the right middle part of Figure \ref{figure fishbowl}). One may think of this body as the revolution of the plane area enclosed by the line $ x=1 $ and the unit circles centered at $(2,1),(2,-1)$ around the $y$-axis. This body has a volume of $\frac{22}{3}\pi - 2\pi^2\approx3.3$. The volume of all remaining thin part of the surface can be arbitrarily small.

Topologically, this surface consists of two spheres and three tunnels (Figure \ref{figure topology graph}), which has genus $2$. We remark that one might be able to modify it---specifically, by gluing four discs---to obtain a topological sphere without increasing its enclosed volume too much so as to construct a counterexample for the Burago-Petrunin problem.
\begin{figure}[htbp]
	\centering
	\begin{subfigure}
		\centering
		\caption{The surfaces $S_1,S_2$}
		\label{figure two surfaces}
		\includegraphics[width=0.5\linewidth,cframe=black 0.1mm]{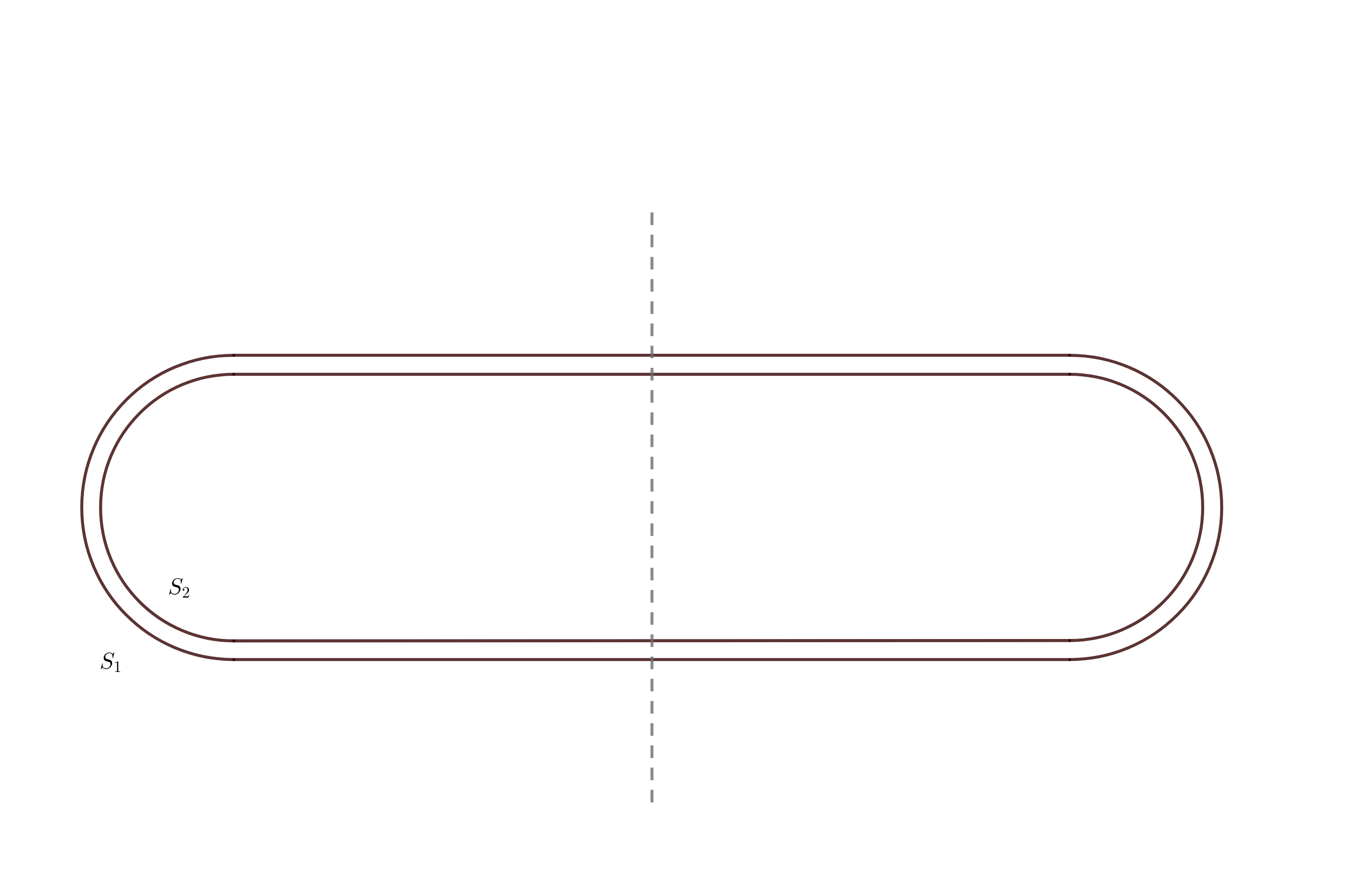}
	\end{subfigure}
	\begin{subfigure}
		\centering
		\caption{Petrunin's Fishbowl}
		\label{figure fishbowl}
		\includegraphics[width=0.5\linewidth,cframe=black 0.1mm]{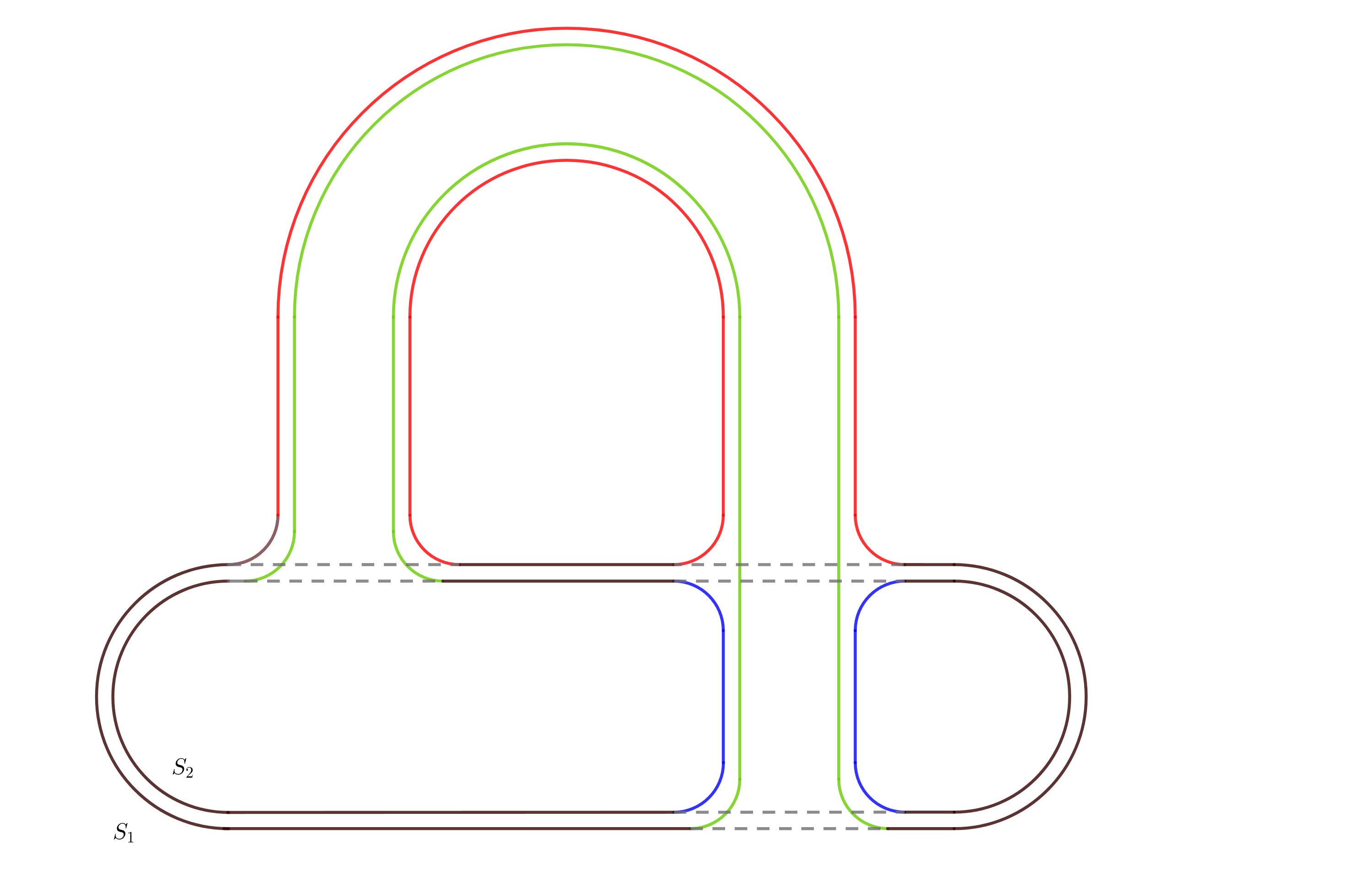}
	\end{subfigure}
	\begin{subfigure}
		\centering
		\caption{The ``topological equivalence" of Petrunin's Fishbowl}
		\label{figure topology graph}
		\includegraphics[width=0.5\linewidth,cframe=black 0.1mm]{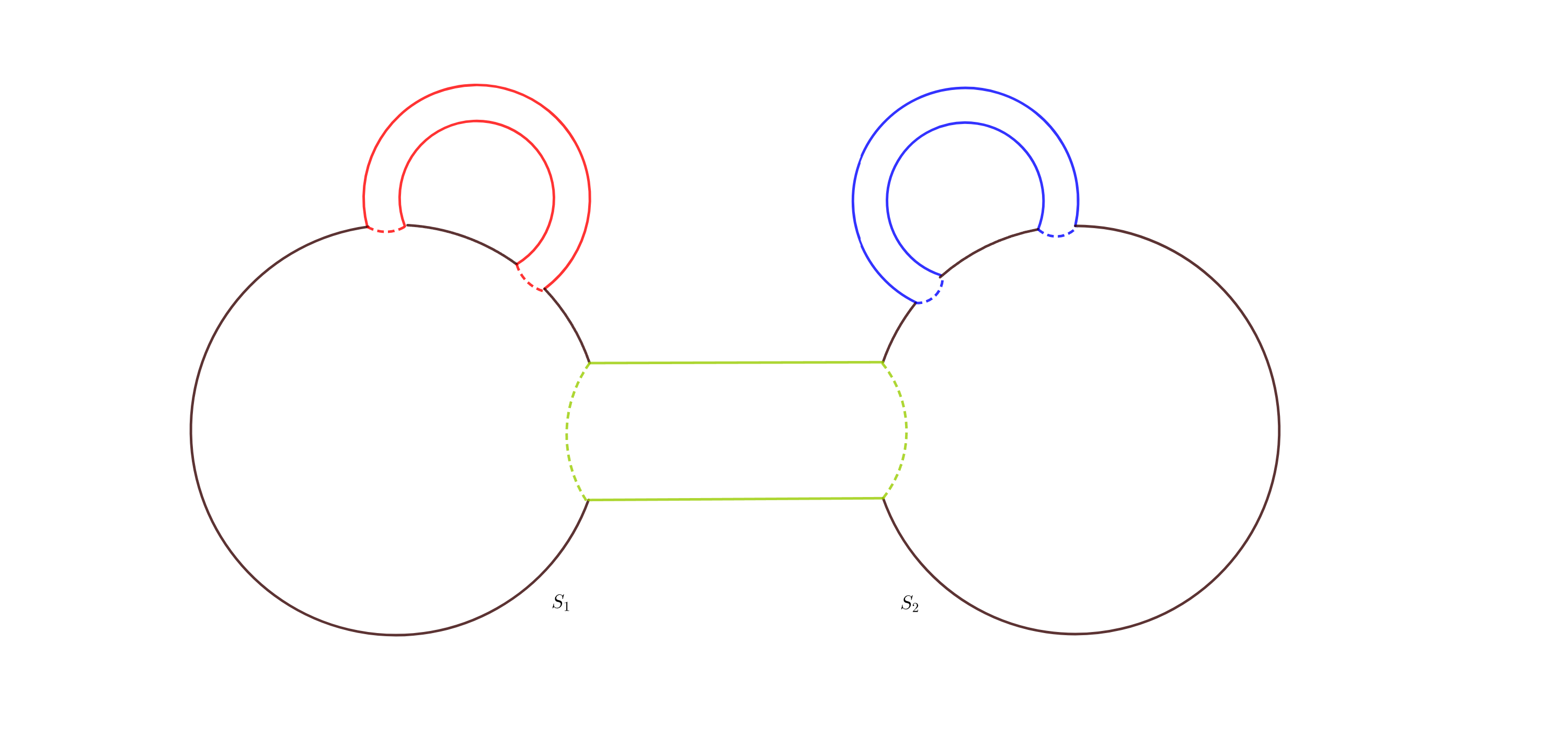}
	\end{subfigure}
\end{figure}

\newpage
\bibliographystyle{unsrt}
\bibliography{ref}

@book{petersen2006riemannian,
  title={Riemannian Geometry},
  author={Peter Petersen},
  isbn={9780387294032},
  lccn={2006923825},
  series={Graduate Texts in Mathematics},
  url={https://books.google.com/books?id=9cekXdo52hEC},
  year={2006},
  publisher={Springer New York}
}

@book{burago2013geometricinequalities,
  title={Geometric Inequalities},
  author={Yuri Burago and Victor Zalgaller},
  isbn={9783662074411},
  series={Grundlehren der mathematischen Wissenschaften},
  url={https://books.google.com/books?id=Gpz6CAAAQBAJ},
  year={2013},
  publisher={Springer Berlin Heidelberg}
}

@book{petrunin2023pigtikalpuzzlesgeometryi,
      title={PIGTIKAL (puzzles in geometry that I know and love)}, 
      author={Anton Petrunin},
      year={2023},
      eprint={0906.0290},
      archivePrefix={arXiv},
      primaryClass={math.HO},
      url={https://arxiv.org/abs/0906.0290}, 
}

@book{lagunov1963extremalproblemssurfacesprescribed,
      title={Extremal problems for surfaces of prescribed topological type (1)}, 
      author={V. N. Lagunov and A. I. Fet},
      year={1963},
      eprint={1903.01224},
      archivePrefix={arXiv},
      primaryClass={math.DG},
      url={https://arxiv.org/abs/1903.01224}, 
}

@book{anton2022mathoverflow,
      title={Sphere with bounded curvature}, 
      author={Anton Petrunin},
      year={2022},
      URL = {https://mathoverflow.net/questions/425509/sphere-with-bounded-curvature},
      archivePrefix={MathOverflow},
      primaryClass={math.OC}
}

@article{pestov1959,
 author = {Pestov, G. and Ionin, V.},
 journal = {Proceedings of the USSR Academy of Sciences (in Russian)},
 number = {127},
 pages = {1170-1172},
 title = {On the largest possible circle imbedded in a given closed curve},
 year = {1959}
}

@book{petrunin2024differentialgeometrycurvessurfaces,
      title={What is differential geometry: curves and surfaces}, 
      author={Anton Petrunin and Sergio Zamora Barrera},
      year={2024},
      eprint={2012.11814},
      archivePrefix={arXiv},
      primaryClass={math.HO},
      url={https://arxiv.org/abs/2012.11814}, 
}

@article{petrunin2024gromovtori,
   title={Gromov’s Tori Are Optimal},
   volume={34},
   ISSN={1420-8970},
   url={http://dx.doi.org/10.1007/s00039-024-00663-0},
   DOI={10.1007/s00039-024-00663-0},
   number={1},
   journal={Geometric and Functional Analysis},
   publisher={Springer Science and Business Media LLC},
   author={Anton Petrunin},
   year={2024},
   month=feb, pages={202–208} }

@article{schur1921bowlemma,
   title={Über die Schwarzsche Extreme Eigenschaft des Kreises unter den Kurven konstanter Krümmung},
   volume={83.1-2},
   journal={Mathematische Annalen},
   publisher={Springer Science and Business Media LLC},
   author={Axel Schur},
   year={1921},
   month=feb, pages={143-148} }

@article{schmidt1925bowlemma,
   title={Über das Extremum der Bogenlänge einer Raumkurve bei vorgeschriebenen Einschränkungen ihrerKrümmung},
   journal={Preuß. Akad. Wiss., Phys.-Math. Kl.},
   author={Erhard Schmidt},
   year={1925},
   pages={485-490} }

@article{lagunov1960russianI,
   title={О наибольшем шаре, вложенном в замкнутую поверхность},
   journal={Сиб. матем. журн},
   author={В. Н. Лагунов},
   year={1960},
   volume = {1},
   pages={205-232},
   language={russian},
   hypernation={russian} }

@article{lagunov1961russianII,
   title={О наибольшем шаре, вложенном в замкнутую поверхность. II},
   journal={Сиб. матем. журн},
   author={В. Н. Лагунов},
   year={1961},
   volume = {2},
   pages={874-883},
   language={russian},
   hypernation={russian} }
%\bibliography{D:/latex/ref}
\end{document}